\theoremstyle{definition}
 \theoremstyle{remark}
\numberwithin{equation}{section}
\def\BMOA{\text{BMOA}}
\begin{document}

\title[Toeplitz and Hankel integral operators]{On asymptotic and essential Toeplitz and Hankel integral operators}
\author{C. Bellavita}
\email{carlobellavita@ub.edu}
\address{Departament of Matem\'atica i Inform\'atica, Universitat de Barcelona, Gran Via 585, 08007 Barcelona, Spain.}

\author{G. Stylogiannis}
\email{stylog@math.auth.gr}
\address{Department of Mathematics, Aristotle University of Thessaloniki, 54124, Thessaloniki, Greece.}

\thanks{
\\
The first author is a member of Gruppo Nazionale per l’Analisi Matematica, la Probabilit\`a e le loro Applicazioni (GNAMPA) of Istituto Nazionale di Alta Matematica (INdAM) and he was partially supported by PID2021-123405NB-I00 by the Ministerio de Ciencia e Innovaci\'on and by the Departament de Recerca i Universitats, grant 2021 SGR 00087.\\
The second author was partially supported by the Hellenic Foundation for Research and Innovation (H.F.R.I.) under the '2nd Call for H.F.R.I. Research Projects to support Faculty Members \& Researchers' (Project Number: 4662).}

\keywords{Toeplitz algebra; Essentially commutator; Integral operators. }

\subjclass{47L80; 47G10}

\begin{abstract}

In this article  we consider the integral operators 
$$
V_gf(z)=\int_{0}^{z}f(u)g'(u)\,du,\quad S_{g}f(z)=\int_{0}^{z}f'(u)g(u)\,du,
$$
acting on the Hilbert space $H^2$.
We characterize  when these operators are  uniform, strong and weakly asymptotic Toeplitz and Hankel operators. Moreover we completely describe the symbols $g$ for which these operators are essentially Hankel and essentially Toeplitz.   

\end{abstract}

\maketitle
\section{Introduction}
\noindent 
The underlying Hilbert space is  the Hardy space $H^2$ of the unit disk $\mathbb{D}$. We consider $\mathcal{B}(H^2)$, the algebra of bounded operators acting on $H^2$. We denote by $S \in \mathcal{B}(H^2)$ the   shift operator given by 
$$
Sf(z)=zf(z),\quad f\in H^2
$$
and by  $S^* \in \mathcal{B
}(H^2)$ the backward-shift operator defined as
$$
S^*f(z)=\frac{f(z)-f(0)}{z}\quad f\in H^2,
$$
see \cite{nikolski2002} and \cite{garcia2016} for further information.
It is clear
that 
$$
S^*\, S=\text{id}_{H^2} \quad \text{ and } \quad 
S\, S^*f(z)=f(z)-f(0).
$$
For this reason, $S$ is unitary in the Calkin algebra $\mathcal{K}(H^2)$, that is 
\begin{equation}\label{Unitary S}
 S^*\,S= S\, S^* =\text{id}_{H^2} \text{ modulo compact operators}.  
\end{equation}
We denote by $\mathcal{K}(H^2)$ the sub-algebra of $\mathcal{B}(H^2)$ made by all the compact operators.

\vspace{11 pt}
\noindent 
A bounded operator $T \in \mathcal{B}(H^2)$ is a Toeplitz operator if 
$$ 
S^{*}\, T\, S - T = 0.
$$
As mentioned by Barr\'ia and Halmos in \cite{Barria1982}, right-multiplying by $S$ removes the first column of the matrix representation of $T$, while left-multiplying by $S^*$ removes the first row. Consequently, the transformation $T\mapsto S^*TS$ shifts the matrix representation of $T$ one position forward along the main diagonal. Therefore, the matrix remains unchanged under this transformation if and only if all its diagonals are constant.
It turns out that every Toeplitz operator is in the form
$$
T_{b}:=P_{+}M_{b}
$$
where $b\in L^\infty$, $M_b$ is the multiplication operator with symbol $b$ and $P_{+}$ is the orthogonal projection from $L^{2}$ onto $H^{2}$. The function $b$ is called the symbol of $T_{b}$.

\vspace{11 pt}\noindent
 The multiplication operator $M_b$, if it is expressed as a matrix operator with respect to the decomposition $L^2 = H^{2^{\perp}}\oplus H^2$, takes the form

\[
M_b=
\begin{pmatrix}
T_{\tilde{b}} & H_{b} \\
H_{\tilde{b}} & T_{b}\\
\end{pmatrix}
\]
where $\tilde{b}(z) = b(\overline{z})$, the diagonal entries are Toeplitz operators, and the antidiagonal are Hankel operators. 
If $b$ and $q$ are in $L^\infty$, then 
\begin{equation}
 T_{bq}=T_{b}T_{q} +H_{\tilde{b}} H_{q}.
\end{equation}
The significance of this equation is that the product of two Toeplitz
operators differs from a Toeplitz operator by the product of the two Hankel
operators, and every product of two Hankel operators arises in this way.
We recall that an Hankel operator $H$ is a transformation for which
 $$
 S^{*}H-H S=0.
 $$
Every Hankel operator is in the form
$$
H_{b}:=P_{-}M_{b}
$$
where $b\in L^\infty$ is not unique and $P_-=\text{id}-P_+$.

\vspace{11 pt}\noindent 
The Toeplitz algebra $\mathcal{T}$, is the uniformly closed algebra generated by the
Toeplitz operators.  It contains $\mathcal{K}(H^2)$ in a non trivial way \cite{Barria1982}. According to  \cite{Barria1982} and \cite{Feintuch1989},
every operator $T \in \mathcal{T}$ is of the form 
$T =  T_b + Q$ where $b\in L^\infty$ and $Q$
is in the commutator ideal of $\mathcal{T}$, which we denote by \textbf{$\mathcal{Q}$}. It is a long-standing problem characterizing the elements of $\mathcal{T}$ or equivalently  describing \textbf{$\mathcal{Q}$}. The classical Hilbert matrix operator, see \cite[Example 8]{Barria1982}, is a nontrivial example of a  non-Toeplitz and non-compact operator which belongs to $\mathcal{T}$. In \cite{Barria1982} Barr\'ia and Halmos raiseid the question if the classical Cesaro operator is in $\mathcal{T}$.

\vspace{11 pt}\noindent 
The $C^{*}$-algebra generated by the shift operator $S$ is of central
importance in modern analysis; it is called the Toeplitz $C^{*}$-algebra and it is denoted by $C^{*}(S)$. Every operator in $C^{*}(S)$ is of the form 
$T =  T_b + K$ where $b\in C(\mathbb{T})$ (the set of continuous functions on the unit circle $\mathbb{T}$) and $K\in\mathcal{K}(H^2)$ \cite[Theorem 4.3.2]{Arveson2002}.

\vspace{11 pt}\noindent 
Barr\'ia and Halmos called an operator $T \in \mathcal{B}(H^2)$ asymptotically Toeplitz if the
sequence of operators $\{S^{*n} TS^{n}\}_n$ converges strongly (i.e., pointwise) on $H^2$
. Being an asymptotic Toeplitz operator is a required condition for the appartenence to the Toeplitz algebra $\mathcal{T}$. Feintuch \cite{Feintuch1989} pointed out that one needs not rule out either weak or norm (i.e., uniform) operator
convergence; hence there are actually three different kinds of \emph{asymptotic toeplitzness}:
weak, strong (the original one) and uniform asymptotic toeplitzness.

\vspace{11 pt}\noindent 
Feintuch in \cite{Feintuch1989} introduced the corresponding asymptotic notions for Hankel operators. The significance of these objects is illustrated in a distance formula which is an operator theoretic analogue of results of
Adamjan, Arov and Krein \cite{Adamjan1971}. 
The set of essentially Hankel operators is introduced in \cite{MartnezAvendao2002} where some of its properties are also investigated. In particular, it is shown that this set contains some operators not of the form ‘Hankel plus compact’.


\begin{defn}\label{D:UAT}
Let $T\in \mathcal{B}(H^2)$, we say that $T$ is:\\

\noindent $i)$ Uniformly asymptotically
Toeplitz (UAT) if $\{S^{*n} TS^{n}\}_n$ converges in the uniform operator topology.\\

\noindent $ii)$ Strongly asymptotically
Toeplitz (SAT) if $\{S^{*n} TS^{n}\}_n$ converges in the strong operator topology.\\

\noindent $iii)$ Weakly asymptotically
Toeplitz (WAT) if $\{S^{*n} TS^{n}\}_n$ converges in the weak operator topology.\\

\noindent 
It is clear that if $\{S^{*n} TS^{n}\}_n$ converges in the weak (strong, uniform) topology then its limit is a Toeplitz operator $T_b$ for some $b\in L^\infty$. We refer to the function $b$ as the symbol of $T$ and denote it as $sym(T)$. 
\end{defn}

\noindent
The SAT, UAT, WAT operators constitute subspaces which are invariant under the adjoint operation. It is clear that
$$
T \text{ is }UAT \Rightarrow T \text{ is }SAT \Rightarrow T \text{ is } WAT.
$$  
\noindent
One can also consider \emph{asymptotic hankelness}. However, since $\lim_{n \to \infty } {S^*}^nf=0$, the definition is less intuitive. We use the one provided by Feintuch in \cite{Feintuch1989} and \cite{FEINTUCH19901}.  

\vspace{11 pt}\noindent 
For $f\in H^2$ with $f(z)=\sum a_n z^n$, we define 
$$
P_{n}f(z)=\sum_{k=0}^{n}a_{k}z^{k}
$$
and on the subspace $P_n H^2$ spanned by $\{e_0, \cdots , e_n\}$, we define  the unitary operator $J_n$ as
$$
J_{n}e_{i}=e_{n-i},\quad 0\leq i\leq n.
$$
$J_n$ is usually called  the permutation operator of order $n$. We extend $J_n$ to $H^2$ by defining it to be zero on $(I - P_n) H^2$ and we denote this operator on $H^2$ by $J_n$ as well. 

\vspace{11 pt}\noindent 
For the operator $T \in \mathcal{B}(H^2)$ we consider the sequence of operators 
$$
\{H_{n}(T)\}_n= \{J_{n}\,T\, S^{n+1}\}_n.
$$

\begin{defn}\label{D:UAH}
Let $T\in \mathcal{B}(H^2)$, we say that $T$ is:\\

\noindent $i)$ Uniformly asymptotically
Hankel (UAH) if $\{H_{n}(T)\}_n$ converges in the uniform operator topology.\\

\noindent $ii)$ Strongly asymptotically
Hankel (SAH) if $\{H_{n}(T)\}_n$ converges in the strong operator topology.\\

\noindent $iii)$ Weakly asymptotically
Hankel (WAH) if $\{H_{n}(T)\}_n$ converges in the weak operator topology.\\
\end{defn}

\noindent It is known, but not immediately obvious, that when $\{H_{n}(T)\}_n$ 
converges, in the weak, strong or uniform topology, its limit is a Hankel operator $H_b$ for some $b\in L^\infty$. We will refer to the function $b$ as the symbol of $T$ and will denote it as $sym_{H}(T)$.
Obviously 
$$
T \text{ is }UAH\Rightarrow T \text{ is } SAH\Rightarrow T \text{ is } WAH.
$$
It is known \cite{Feintuch1989} that weak asymptotic toeplitzness implies weak asymptotic hankelness, and that,
in this case, the symbols are the same. Thus when $T$ is both asymptotic Toeplitz and Hankel in the uniform and strong sense, the symbols must be the same.

\vspace{11 pt}\noindent 
Another interesting sub-algebra of $\mathcal{B}(H^2)$ is the essential commutant of the shift operator.
\begin{defn}\label{D:ess Toeplitz}
An operator $T \in \mathcal{B}(H^2)$ belongs to essential commutant  $\mathcal{E}$  of the shift operator if
 $$
 ST-TS \text{ is a compact operator.}
 $$
 The operator $T$ is then called essentially Toeplitz (ess Toep).
 \end{defn}
\noindent Since $\mathcal{E}$ contains all Toeplitz operators,   Toeplitz algebra $\mathcal{T}$ is also included in $\mathcal{E}$.  It follows that $\mathcal{E}$ is closed under the formation of adjoints and is therefore a $C^{*}$-algebra. As noted in \cite{MartnezAvendao2002} one could define an asymptotic Toeplitz operator
in the Calkin algebra as an operator $T$ such that the sequence $\{S^{*n} TS^{n}\}_n$ converges in
the Calkin algebra. It turns out that this class is the same as the class of the
ess Toep operators.

\vspace{11 pt}\noindent 
Analogously one can define the subspace of essentially Hankel operators.
\begin{defn}\label{D:ess hank}
A bounded operator $T$ is essentially Hankel (ess Hank) if 
$$
S^{*}\, H-H\, S  \text{ is a compact operator}.
$$
\end{defn}
\noindent Clearly, all the Hankel operators are essentially Hankel, but ess Hank does not constitute a sub-algebra of $\mathcal{B}(H^2)$.
One could define $H$ to be an asymptotic Hankel operator in
the Calkin algebra if $S^{*n}HS^{*n}$ converges in the Calkin algebra or if $S^{n}HS^{n}$ converges
in the Calkin algebra. In either case as noticed in \cite{MartnezAvendao2002}, this class coincides with the class of the
ess Hank operators.

\vspace{11 pt}
\noindent
By extending our focus from a single element (the shift operator $S \in \mathcal{B}(H^2)$) to an entire sub-algebra $A\subset \mathcal{B}(H^2)$, we delve into the essential commutator of $A$, i.e. $\mathcal{E}(A)$.
\begin{defn}
Let $\mathcal{A}\subset\mathcal{B}(H^{2})$. The essential commutator of $\mathcal{A}$ is 
$$
\mathcal{E}(\mathcal{A})=\{T\in \mathcal{B}(H^{2}) :TX-XT\in \mathcal{K}(H^{2}), \text{ for every } X\in\mathcal{A}\}.
$$
\end{defn}

\noindent  In this article we characterize for which symbol the corresponding integral operator is SAT, UAT and WAT, and SAH, UAH and WAH. This lead us to prove that no non-compact generalized Volterra integral operator is in $C^{*}(S)$. Additionally , we describe  which integral operators are  ess Toep, ess Hank
and we prove that no non-compact integral operators belong to $\mathcal{E}(\mathcal{T})$.

\vspace{11 pt}
\noindent
The rest of the article is divided into four sections: after having recalled the definitions and the main properties of the integral operators, in Section 3 we characterize when they are UAT, SAT and WAT, and SAH, UAH and WAH. In section 4, we fix our attention on the essentially hankelness and essentially toeplitzness and we prove when the integral operators belong to these families. Finally, in section 5, we give some insights into the elements of the Toeplitz algebra.

\vspace{22 pt}
\section{Preliminaries on integral operators}
\noindent 
Let $f$ and $g$ be holomorphic functions in unit disk $\mathbb{D}$. The integral operators $V_g$ and $S_g$ are defined as
$$
V_gf(z)=\int_0^z f(u)g'(u)\,du\ , \quad S_gf(z)=\int_0^z f'(u)g(u)\,du.
$$ 
One of the reasons why these operators are so well studied is their connection with the multiplication operator $M_g$. Indeed
\begin{equation}\label{E:relation integral and multiplication}
  M_g=S_g+V_g+g(0)\delta_0
\end{equation}
where
$$
\delta_0f=f(0).
$$
The integral operator $V_g$  generalizes well-known operators:
if $g(z)=z$, then $V_g$ is the classical Volterra operator, usually denoted by $V$; if $g(z)=-\log(1-z)$, then $V_g$ is the shifted Cesaro operator, usually denoted by $C$.

\vspace{11 pt}\noindent
The study of the action of these integral operators on spaces of analytic functions was initiated by Pommerenke in \cite{Pommerenke1977} and continued by Aleman, Cima and Siskakis in \cite{Aleman1995} and \cite{Alemancima}. For information on $S_g$ see \cite{Anderson2010}. They obtained the following result.
\begin{thm}[Pommerenke, Aleman, Cima, Siskakis and Anderson]
 The integral operator $V_g$ is bounded on the Hardy space $H^p$ if and and only if $g$ is a bounded mean oscillation analytic function (it belongs to \text{BMOA}). The integral operator $S_g$ is bounded on the Hardy space $H^p$ if and and only if $g$ is a bounded analytic function (it belongs to $H^\infty$).
\end{thm}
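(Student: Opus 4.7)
The plan is to exploit differentiation to reduce each boundedness question to a weighted measure estimate on $\mathbb{D}$, via the Littlewood--Paley identity
\begin{equation*}
\|h\|_{H^2}^2 = |h(0)|^2 + 2\int_{\mathbb{D}} |h'(z)|^2 (1-|z|^2)\, dA(z),
\end{equation*}
and its $H^p$-analogue $\|h\|_{H^p}^p \asymp |h(0)|^p + \int_{\mathbb{D}} |h(z)|^{p-2}|h'(z)|^2 (1-|z|^2)\, dA(z)$. The key observations are $(V_g f)(0)=(S_g f)(0) = 0$, $(V_g f)' = f g'$, and $(S_g f)' = f' g$, which translate operator norms into integral expressions in $f,g$. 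I would first carry out the argument for $p=2$ in full and then explain how maximal function/Carleson embedding tools give the extension to general $p$.

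For $V_g$, applying Littlewood--Paley gives that $V_g$ is bounded on $H^2$ if and only if
\begin{equation*}
\int_{\mathbb{D}} |f(z)|^2 \, d\mu_g(z) \lesssim \|f\|_{H^2}^2 \quad \text{for all } f\in H^2, \qquad d\mu_g(z) := |g'(z)|^2 (1-|z|^2)\, dA(z).
\end{equation*}
This is by definition the statement that $d\mu_g$ is a Carleson measure for $H^2$. I would then invoke the Fefferman--Stein characterization: $d\mu_g$ is a Carleson measure precisely when $g \in \text{BMOA}$, using the standard tent/Carleson-box condition and the identification $\|g\|_{\text{BMOA}}^2 \asymp \sup_{I\subset\mathbb{T}} \frac{1}{|I|} \mu_g(S(I))$. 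For general $p$, the same reduction works through the $H^p$ Littlewood--Paley inequality and Luecking/Carleson-type embedding theorems; the Carleson condition is independent of $p$, which explains why the BMOA criterion persists.

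For $S_g$, the analogous reduction yields boundedness on $H^2$ if and only if
\begin{equation*}
\int_{\mathbb{D}} |f'(z)|^2 |g(z)|^2 (1-|z|^2)\, dA(z) \lesssim \|f\|_{H^2}^2.
\end{equation*}
Sufficiency is immediate: if $g\in H^\infty$, pull out $\|g\|_\infty^2$ and apply Littlewood--Paley again. For necessity I would test on the normalized reproducing kernels $f_a(z) = (1-|a|^2)^{1/2}/(1-\bar a z)$, which satisfy $\|f_a\|_{H^2}=1$, and compute
\begin{equation*}
\int_{\mathbb{D}} \frac{|a|^2(1-|a|^2)\, (1-|z|^2)}{|1-\bar a z|^4}\, |g(z)|^2 \, dA(z) \leq C,
\end{equation*}
uniformly in $a\in\mathbb{D}$. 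A change of variables to the Möbius disc automorphism $\varphi_a$ concentrates the kernel near $a$ and shows, via the subharmonicity of $|g|^2$, that $|g(a)|^2 \lesssim C$ for every $a$; thus $g\in H^\infty$. The $H^p$ case follows by the same test-function scheme against $f_a(z)=(1-|a|^2)^{1/p}/(1-\bar a z)^{2/p}$.

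The main obstacle, and the place where I would have to be most careful, is the necessity half for $S_g$ at general $p$: one must control the $p$-Littlewood--Paley term $|S_g f|^{p-2}$ (which couples $f$ and $g$ nonlinearly) and still extract pointwise information about $g$. I would handle this by combining the test-function argument above with the Hardy--Littlewood maximal estimate to dominate $|S_g f_a|^{p-2}$ on a fixed Carleson box, thereby isolating $|g(a)|$. The BMOA step for $V_g$ at general $p$ is technically smooth once one accepts the Luecking characterization of Carleson measures for $H^p$; no genuinely new ideas are needed.
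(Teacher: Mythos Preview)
The paper does not prove this statement at all: it is quoted as a background result and attributed to Pommerenke, Aleman, Cima, Siskakis and Anderson, with references to \cite{Pommerenke1977}, \cite{Aleman1995}, \cite{Alemancima}, and \cite{Anderson2010}. There is therefore nothing to compare against in the paper itself.

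That said, your sketch is essentially the standard route taken in those references. The reduction of $V_g$ to a Carleson-measure condition via Littlewood--Paley and the identification of that condition with $g\in\mathrm{BMOA}$ is exactly Pommerenke's and Aleman--Siskakis's argument. For $S_g$, your sufficiency is the obvious one; for necessity your reproducing-kernel test is workable, though in practice one often argues more directly from the relation $M_g=V_g+S_g+g(0)\delta_0$ together with a closed-graph/testing argument, or by noting that boundedness of $S_g$ forces the pointwise multiplier estimate on $g$ via the growth of $S_g f_a$ near the boundary. Your remark that the $p\neq 2$ case for $S_g$ necessity is the most delicate step is accurate; the coupling through $|S_g f|^{p-2}$ is usually avoided by working instead with the Calder\'on area-function form of the $H^p$ norm rather than the raw Littlewood--Paley integral you wrote, which keeps the estimate linear in $|g|^2$.
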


\noindent
They characterized also the compact integral operators.
\begin{thm}[\label{T:compactness}Aleman, Cima, Siskakis and Anderson]
 The integral operator $V_g$ is compact on  $H^p$ if and and only if $g$ is a vanishing mean oscillation analytic function (it belongs to \text{VMOA}). On the other hand, the integral operator $S_g$ is never compact.
\end{thm}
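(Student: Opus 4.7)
The plan is to treat the two assertions separately. For the characterization of when $V_g$ is compact, I would argue via the Littlewood--Paley identity together with the Carleson measure characterization of $\text{BMOA}/\text{VMOA}$. Since $(V_g f)'(z) = f(z) g'(z)$ and $(V_g f)(0) = 0$, for $p = 2$ one has
$$\|V_g f\|_{H^2}^2 \asymp \int_{\mathbb{D}} |f(z)|^2 |g'(z)|^2 (1-|z|^2)\, dA(z).$$
Setting $d\mu_g(z) = |g'(z)|^2 (1-|z|^2)\, dA(z)$, $V_g$ is bounded on $H^2$ if and only if the inclusion $H^2 \hookrightarrow L^2(\mu_g)$ is bounded, i.e., $\mu_g$ is a Carleson measure, and this is the area-function characterization of $g \in \text{BMOA}$. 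For general $p$ one uses the $H^p$ Littlewood--Paley identity and the standard tent-space/area-function description.

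The compactness step then amounts to upgrading the boundedness of the inclusion to its compactness: $V_g$ is compact on $H^p$ if and only if the weighted inclusion $H^p \hookrightarrow L^p(\mu_g)$ is compact, which is in turn equivalent to $\mu_g$ being a \emph{vanishing} Carleson measure,
$$\lim_{|I|\to 0} \frac{\mu_g(S(I))}{|I|} = 0,$$
where $S(I)$ is the Carleson box over the arc $I\subset\mathbb{T}$. This vanishing condition on $d\mu_g = |g'|^2(1-|z|^2)\,dA$ is exactly the area-function characterization of $\text{VMOA}$. The main care is needed in the compactness reduction itself: I would test $V_g$ on a weakly null bounded family, for instance the normalized reproducing kernels $k_w/\|k_w\|$ as $|w|\to 1$, and relate $\|V_g(k_w/\|k_w\|)\|_{H^2}^2$ to the local mass $\mu_g(S(I_w))/|I_w|$ of the Carleson box shadowing $w$.

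For the second part, that $S_g$ is never compact when $g\in H^\infty\setminus\{0\}$, the idea is to exhibit an explicit bounded weakly null sequence whose images do not converge to zero in norm. The natural choice is $f_n(z)=z^n$: clearly $\|f_n\|_{H^2}=1$ and $f_n\to 0$ weakly. Writing $g(z)=\sum_{k\ge 0} b_k z^k$, a direct computation yields
$$S_g z^n(z) = n\int_0^z u^{n-1}g(u)\,du = \sum_{k\ge 0}\frac{n\,b_k}{n+k}\,z^{n+k},$$
so
$$\|S_g z^n\|_{H^2}^2 = \sum_{k\ge 0}\frac{n^2}{(n+k)^2}\,|b_k|^2 \xrightarrow[n\to\infty]{} \sum_{k\ge 0}|b_k|^2 = \|g\|_{H^2}^2 > 0$$
by dominated convergence. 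A compact operator must send weakly null sequences to norm-null sequences, which contradicts the above; an analogous monomial test sequence handles $H^p$ for $p\neq 2$. Conceptually the hardest step of the whole theorem is the Carleson-measure compactness reduction in the first part; the non-compactness of $S_g$ then follows from the short explicit calculation above.
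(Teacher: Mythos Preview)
The paper does not actually prove this theorem: it is stated in the preliminaries section as a known result, attributed to Aleman, Cima, Siskakis and Anderson, with references to \cite{Aleman1995}, \cite{Alemancima} and \cite{Anderson2010}, and no argument is given. So there is no ``paper's own proof'' to compare against.

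That said, your sketch is correct and is essentially the standard argument one finds in those references. The Littlewood--Paley reduction of $\|V_g f\|_{H^p}$ to the embedding $H^p\hookrightarrow L^p(\mu_g)$ with $d\mu_g=|g'|^2(1-|z|^2)\,dA$, together with the (vanishing) Carleson measure characterization of $\mathrm{BMOA}$/$\mathrm{VMOA}$, is exactly how the $V_g$ compactness statement is proved in the Aleman--Siskakis papers. For $S_g$, your monomial test is precisely the kind of argument used in \cite{Anderson2010}, and incidentally the same test sequence $f_n(z)=z^n$ is used later in the present paper (Theorems~\ref{UAT Tg and S_g} and~\ref{T: ess hank}) for closely related purposes. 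One small point: when you say ``$S_g$ is never compact'' you correctly restrict to $g\not\equiv 0$; you should also note that boundedness of $S_g$ on $H^p$ already forces $g\in H^\infty$, so that $\|g\|_{H^2}<\infty$ and the dominated convergence step is justified.
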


\noindent The action of the integral operators have been widely studied, see for example  \cite{BellavitaOptimal}, \cite{Chalmoukis2021}, \cite{10.1093/imrn/rnaa070}, \cite{Pang2023} and \cite{10.1216/jie.2023.35.131}  . In this article we fix our attention on the integral operators acting on the Hardy space $H^2$. For the properties of these spaces of analytic functions we refer to the classical monographs \cite{duren2000} and \cite{garnett2006bounded}.

\vspace{11 pt}
\noindent Before delving into the computations, we highlight a property which will be often use in the rest of the article: through series expansion, for every $n \in \mathbb N$, the following identity holds
\begin{equation}\label{Commutator formula}
V_{z^n}\,V_{g}f=S^n\,V_{g}f-V_{g}\,S^nf,\quad f\in \mathcal{H}(\mathbb{D}),
\end{equation}
where $S^n$ corresponds to the application of the shift operator $n$ times.

\vspace{22 pt}
\section{Uniformly, strongly and weakly asymptotic toeplitzness and hankelness}
\noindent In this section, we characterize for which symbols $g$ the integral operators $S_g$ and $V_g$ are uniformly, strongly and weakly asymptotically Toeplitz and Hankel.

\vspace{11 pt}
\noindent 
 Feintuch in \cite[Theorem 4.1]{Feintuch1989} characterized the uniformly asymptotic Toeplitz operators.

\begin{thm}[Feintuch]\label{T:Feintuch}
An operator $T \in \mathcal{B}(H^2)$ is uniformly asymptotic Toeplitz if and only if 
$$
T=T_\phi+K,
$$
where $T_\phi$ is a bounded Toeplitz operator and $K\in\mathcal{K}(H^2)$.
\end{thm}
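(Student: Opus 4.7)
The plan is to prove the two implications separately, with the bulk of the work concentrated in the compactness argument for necessity.

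For sufficiency, given $T = T_\phi + K$, the defining relation $S^{*} T_\phi S = T_\phi$ iterated yields $S^{*n} T_\phi S^{n} = T_\phi$ for every $n$, so it suffices to prove that $\|S^{*n} K S^{n}\| \to 0$ whenever $K$ is compact. Since for each fixed $f \in H^{2}$ the sequence $S^{n} f$ has constant norm but its Fourier coefficients migrate to higher indices, $S^{n} \to 0$ in the weak operator topology. Compactness of $K^{*}$ therefore gives $\|K^{*} S^{n}\| \to 0$, equivalently $\|S^{*n} K\| \to 0$, and the isometry property $\|S^{n}\| = 1$ completes the estimate $\|S^{*n} K S^{n}\| \leq \|S^{*n} K\|$.

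For necessity, let $L$ be the uniform limit of $\{S^{*n} T S^{n}\}_n$; it is bounded by $\|T\|$. From
$$S^{*} L S = \lim_{n \to \infty} S^{*(n+1)} T S^{n+1} = L,$$
the classical Brown-Halmos characterization identifies the solutions of $S^{*} A S = A$ in $\mathcal{B}(H^{2})$ with the bounded Toeplitz operators, so $L = T_\phi$ for some $\phi \in L^{\infty}$. Setting $K := T - T_\phi$, the toeplitzness of $T_\phi$ gives $\|S^{*n} K S^{n}\| = \|S^{*n} T S^{n} - T_\phi\| \to 0$, and it remains to deduce from this that $K$ is compact.

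This is the heart of the proof. Denote by $P_{n-1}$ the projection onto polynomials of degree less than $n$, so that $I - P_{n-1} = S^{n} S^{*n}$. Writing $I = P_{n-1} + S^{n} S^{*n}$ on both sides of $K$ and expanding yields the decomposition
$$K = \bigl(P_{n-1} K + S^{n} S^{*n} K P_{n-1}\bigr) + S^{n} S^{*n} K S^{n} S^{*n}.$$
The bracketed term has rank at most $2n$, since $P_{n-1}$ has rank $n$. For the remaining term, a direct computation using $S^{*n} S^{n} = I$ together with $\|S^{n}\| = 1$ gives $\|S^{n} A S^{*n}\| = \|A\|$ for every $A \in \mathcal{B}(H^{2})$, so its norm equals $\|S^{*n} K S^{n}\|$, which tends to zero by hypothesis. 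Hence $K$ is a uniform limit of finite-rank operators, and therefore compact.

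The main obstacle I anticipate is the Brown-Halmos step in the necessity direction, since one must confirm that the uniform limit $L$ is itself a bounded Toeplitz operator with an $L^{\infty}$ symbol; this is classical but not trivial. The remainder of the argument is a careful bookkeeping of ranks and norms in the decomposition displayed above.
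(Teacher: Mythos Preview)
The paper does not supply its own proof of this theorem: it is stated as Feintuch's result and attributed to \cite[Theorem 4.1]{Feintuch1989}, so there is no in-paper argument to compare against. Your argument is the standard one and is essentially correct; the decomposition $K = P_{n-1}K + S^{n}S^{*n}KP_{n-1} + S^{n}(S^{*n}KS^{n})S^{*n}$ together with $\|S^{n}AS^{*n}\| = \|A\|$ cleanly yields compactness of $K$ in the necessity direction, and the Brown--Halmos identification of the limit is exactly what is needed.

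One small imprecision in the sufficiency direction: the implication ``$S^{n}\to 0$ in the weak operator topology and $K^{*}$ compact, hence $\|K^{*}S^{n}\|\to 0$'' is not valid as a general principle (compactness upgrades weak-to-norm convergence for \emph{vectors}, not for operator sequences in WOT). The correct justification is that $S^{*n}\to 0$ in the \emph{strong} operator topology, and for any compact $K$ and any uniformly bounded sequence $A_{n}\to 0$ in SOT one has $\|A_{n}K\|\to 0$ (approximate $K$ by finite rank, or use that $K$ maps the unit ball into a precompact set on which $A_{n}\to 0$ uniformly). With that adjustment your proof is complete.
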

\noindent 
Thanks to Theorem \ref{T:Feintuch}, we are able to characterize which integral operators are UAT. 
\begin{thm}\label{UAT Tg and S_g}
 The integral operator $V_g$ on $H^2$ is uniformly asymptotic Toeplitz if and only if $V_g$ is compact, that is, if and only if $g \in \text{VMOA}$.\\
 The integral operator $S_g$ on $H^2$ is uniformly asymptotic Toeplitz if and only if $g\in QA:=VMOA\cap H^\infty$.
\end{thm}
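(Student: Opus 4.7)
The plan is to use Feintuch's characterization (Theorem~\ref{T:Feintuch}): an operator is UAT if and only if it decomposes as a bounded Toeplitz operator plus a compact operator, and in that case the Toeplitz summand is necessarily the uniform (hence strong) limit of $S^{*n}TS^{n}$. I will therefore (i) identify the Toeplitz symbol of $V_{g}$ and of $S_{g}$ by computing these strong limits explicitly on the basis $\{z^{k}\}$, and (ii) combine Feintuch's theorem with Theorem~\ref{T:compactness} and the identity (\ref{E:relation integral and multiplication}) to translate ``$T - \text{Toeplitz part is compact}$'' into the desired VMOA or QA condition.

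Writing $g(z)=\sum_{j}b_{j}z^{j}$, an elementary series computation shows
\[
S^{*n}V_{g}S^{n}z^{k}=\sum_{j\geq 1}\frac{j\,b_{j}}{n+k+j}\,z^{k+j},\qquad
S^{*n}S_{g}S^{n}z^{k}=(k+n)\sum_{j\geq 0}\frac{b_{j}}{n+k+j}\,z^{k+j}.
\]
For $V_{g}$ the squared $H^{2}$-norm of the first expression equals $\sum_{j\geq 1}j^{2}|b_{j}|^{2}(n+k+j)^{-2}$; each summand is bounded by $|b_{j}|^{2}$ and tends to $0$, so dominated convergence against $\sum|b_{j}|^{2}=\|g\|_{H^{2}}^{2}$ yields $\|S^{*n}V_{g}S^{n}z^{k}\|\to 0$ for every $k$, hence $S^{*n}V_{g}S^{n}\to 0$ strongly. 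For $S_{g}$ the difference $T_{g}z^{k}-S^{*n}S_{g}S^{n}z^{k}=\sum_{j\geq 0}j\,b_{j}(n+k+j)^{-1}z^{k+j}$ is handled identically, giving $S^{*n}S_{g}S^{n}\to T_{g}$ strongly (recall $T_{g}=M_{g}$ when $g\in H^{\infty}$).

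Now apply Feintuch. If $V_{g}$ is UAT its Toeplitz part must be $0$, whence $V_{g}$ itself is compact and Theorem~\ref{T:compactness} gives $g\in\text{VMOA}$; the converse is immediate since compact operators are trivially UAT. If $S_{g}$ is UAT its Toeplitz part is $T_{g}$, so $S_{g}-T_{g}$ is compact; rewriting (\ref{E:relation integral and multiplication}) as $S_{g}-T_{g}=-V_{g}-g(0)\delta_{0}$ and noting that $\delta_{0}$ has rank one, this forces $V_{g}$ to be compact, equivalently $g\in\text{VMOA}$, and combined with the standing hypothesis $g\in H^{\infty}$ this yields $g\in QA$. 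For the converse direction, $g\in QA$ makes $V_{g}+g(0)\delta_{0}$ compact, so $S_{g}=T_{g}-(V_{g}+g(0)\delta_{0})$ exhibits $S_{g}$ as Toeplitz plus compact.

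I expect no serious obstacle: the analytic content reduces to two routine dominated-convergence estimates and a rearrangement of the identity (\ref{E:relation integral and multiplication}). The only delicate point is the bookkeeping needed to see that the Toeplitz part of $S_{g}$ (computed from the strong limit) really is $T_{g}$ and that the algebraic relation $M_{g}=S_{g}+V_{g}+g(0)\delta_{0}$ then turns the compactness of $S_{g}-T_{g}$ into the compactness of $V_{g}$, which is the bridge between Feintuch's theorem and Theorem~\ref{T:compactness}.
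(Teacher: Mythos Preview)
Your proposal is correct and follows essentially the same strategy as the paper: both reduce to Feintuch's Theorem~\ref{T:Feintuch}, show that the only possible Toeplitz summand of $V_g$ is $0$ (so that UAT forces $V_g$ itself to be compact), and then handle $S_g$ via the identity~(\ref{E:relation integral and multiplication}). The one presentational difference is that you first establish $S^{*n}V_gS^n\to 0$ strongly by an explicit coefficient computation with dominated convergence (thereby anticipating Theorem~\ref{T:SAT}), whereas the paper argues by contradiction on a hypothetical decomposition $T_b-V_g\in\mathcal{K}(H^2)$, testing on the weakly null sequence $e_n(z)=z^n$ and using $V_g(e_n)\to 0$ together with $\|T_b(e_n)\|\to\|b\|_{L^2}$ to force $b=0$; both routes amount to identifying the symbol as~$0$.
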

\begin{proof}
We start with the $V_g$ operator. We want to show that if $V_g$ is not compact, then its difference with any Toeplitz operator fails to be compact. Since $V_g \notin \mathcal{K}(H^2)$, then $g \in \text{BMOA}\setminus \text{VMOA}$. We call
$$
\Delta_b=T_b-V_g,
$$
where $T_b$ is an arbitrary, non-zero, bounded Toeplitz operator. We want to prove that for every $b \in L^\infty(\mathbb{T})$, the operator $\Delta_b$ is never compact, which implies that $V_g $ is not UAT, because of Theorem \ref{T:Feintuch}.
We assume by contradiction that $\Delta_b$ is compact and we consider the sequence of monomials $e_{n}(z)=z^{n} \in H^2$, which is weakly converging to zero. Since $V_{g}(e_{n})\to 0$ strongly, we have, by the compactness of $\Delta_b$, that
$$
\|T_b(e_{n})\|_{H^2}\to0\quad\mbox{as } n\to\infty.
$$
On the other hand, since $\|T_b(e_{n})\|_{H^2}\to \|b\|_{L^{2}}$, the function $b$ must be the zero function. This provides the desired contradiction. If  $g \in \text{VMOA}$ by Theorem \ref{T:compactness} the operator $V_g$ is compact. Therefore by Theorem \ref{T:Feintuch} $V_g$ is UAT.

\vspace{11 pt}\noindent 
For the operator $S_g$ we reason as follows. Let $g\in \text{VMOA}\cap H^\infty$. From Theorem \ref{T:Feintuch} and the identity (\ref{E:relation integral and multiplication}),
we realize that $S_g$ is UAT. Let us now asuume that $g\in H^\infty\setminus \text{VMOA}$ and  suppose that $S_g$  is still UAT. By Theorem \ref{T:Feintuch}, $S_{g}=T_{b}+K$, where $K \in \mathcal{K}(H^2)$ and $T_b$ is a Toeplitz operator. Therefore by \ref{E:relation integral and multiplication}
$$
V_{g}=-T_{b}+M_{g}+K'
$$
with $K'\in \mathcal{K}(H^2)$ and $M_g-T_b$ another Toeplitz operator. However, because of Theorem \ref{T:Feintuch}, $V_g$ appears to be UAT, which implies that $g\in\text{VMOA}$ by the previous reasoning. This is a contradiction. The proof of the theorem is now completed.\\
\end{proof}

\noindent 
The previous Theorem  provides as an easy criterion for the membership of an integral operator in the Toeplitz  $C^*$-algebra. 
\begin{cor}
  The integral operator $V_g$ on $H^2$ is in $C^{*}(S)$ if and only if $V_g$ is compact, that is, if and only if $g \in \text{VMOA}$.\\
 The integral operator $S_g$ on $H^2$ is in $C^*(S)$ if and only if $g\in \mathcal{A}(\mathbb{D})\cap \text{VMOA}$, where $\mathcal{A}(\mathbb{D})$ denotes the disk algebra.  
\end{cor}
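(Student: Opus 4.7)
The plan is to combine the Arveson structure theorem for $C^{*}(S)$ quoted in the introduction with Theorem \ref{UAT Tg and S_g}. Since every $T\in C^{*}(S)$ has the form $T_b+K$ with $b\in C(\mathbb{T})$ and $K\in \mathcal{K}(H^2)$, such a $T$ is in particular of the form ``Toeplitz plus compact'', hence UAT by Theorem \ref{T:Feintuch}; and taking $b\equiv 0$ we also get $\mathcal{K}(H^2)\subseteq C^{*}(S)$. These two observations reduce both statements to Theorem \ref{UAT Tg and S_g} plus, in the $S_g$ case, an identification of the symbol as a continuous function.

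For the $V_g$ part the argument is essentially immediate. If $V_g\in C^{*}(S)$, then $V_g$ is UAT, so Theorem \ref{UAT Tg and S_g} forces $V_g$ to be compact, i.e.\ $g\in \text{VMOA}$. Conversely, if $g\in \text{VMOA}$ then $V_g\in \mathcal{K}(H^2)\subseteq C^{*}(S)$, closing the loop.

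For the $S_g$ part I would follow the same strategy, but with an extra symbol-identification step. Assuming $S_g\in C^{*}(S)$, Theorem \ref{UAT Tg and S_g} first gives $g\in \text{QA}=\text{VMOA}\cap H^{\infty}$. To then pin down which continuous function $b$ can appear in the Arveson decomposition $S_g=T_b+K$, I would use identity \eqref{E:relation integral and multiplication} together with the fact that for analytic bounded $g$ the multiplication operator $M_g$ coincides with the Toeplitz operator $T_g$, to rewrite
$$
S_g = T_g - V_g - g(0)\delta_0,
$$
where $V_g$ is compact (because $g\in \text{VMOA}$) and $\delta_0$ has rank one. Comparing this with $S_g=T_b+K$ yields that $T_{b-g}$ is compact, and the classical fact that a Toeplitz operator is compact only when its symbol vanishes a.e.\ then gives $b=g$ a.e.\ on $\mathbb{T}$. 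Since $b\in C(\mathbb{T})$ and $g$ is analytic on $\mathbb{D}$, this boundary identification forces $g\in \mathcal{A}(\mathbb{D})$. The converse is clean: if $g\in \mathcal{A}(\mathbb{D})\cap \text{VMOA}$, then $T_g\in C^{*}(S)$ (its symbol is continuous) and $V_g+g(0)\delta_0$ is compact, so the displayed identity places $S_g$ in $C^{*}(S)$. The only nontrivial step is the symbol-identification via ``Toeplitz compact iff symbol zero''; the rest is a direct transposition of Theorem \ref{UAT Tg and S_g}.
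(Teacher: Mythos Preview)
Your argument is correct and is precisely the intended one: the paper states the corollary without proof, treating it as an immediate consequence of Theorem \ref{UAT Tg and S_g} together with Arveson's description of $C^{*}(S)$, and you have spelled out exactly those details, including the symbol-identification step via ``compact Toeplitz $\Rightarrow$ zero symbol'' needed for $S_g$. One minor remark: since $C(\mathbb{T})\subseteq \text{VMO}$, one actually has $\mathcal{A}(\mathbb{D})\subseteq \text{VMOA}$, so the intersection in the statement is redundant; this does not affect your proof, but it means that once you obtain $g\in \mathcal{A}(\mathbb{D})$ the VMOA condition comes for free.
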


\vspace{11 pt}
\noindent 
Instead of considering the limit in uniform operator norm, it is possible to consider the limit in other operator topologies.
We characterize  when the integral operators are weakly asymptotically Toeplitz and strongly asymptotically Toeplitz.

\begin{thm}\label{T:WAT Tg}
The integral operator $V_g$ on $H^2$ is always weakly asymptotically Toeplitz  with weak asymptotic symbol $0$.\\ 
\noindent The integral operator $S_g$ on $H^2$ is always weakly asymptotically Toeplitz  with weak asymptotic symbol $g$.
\end{thm}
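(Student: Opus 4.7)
The plan is to reduce both claims to the commutator formula \eqref{Commutator formula} and the decomposition \eqref{E:relation integral and multiplication}, after noting that WOT convergence of the uniformly bounded sequence $\{S^{\ast n} T S^n\}_n$ (with $\|S^{\ast n} T S^n\| \le \|T\|$) follows from pointwise convergence of matrix entries in the monomial basis, so it is enough to argue on polynomials.

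For $V_g$, I would rewrite \eqref{Commutator formula} as $V_g S^n f = S^n V_g f - V_{z^n} V_g f$ and premultiply by $S^{\ast n}$, which gives
$$
S^{\ast n} V_g S^n f \;=\; V_g f \;-\; D_n (V_g f), \qquad D_n := S^{\ast n} V_{z^n}.
$$
A one--line computation on monomials identifies $D_n$ as the diagonal Fourier multiplier $z^k \mapsto \frac{n}{n+k}\, z^k$, which is a self--adjoint contraction on $H^2$. By dominated convergence applied to the coefficient series, $D_n \to \mathrm{id}_{H^2}$ in the strong operator topology, hence $S^{\ast n} V_g S^n \to 0$ in SOT (in particular in WOT), yielding the WAT conclusion with symbol $0$.

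For $S_g$, I would apply \eqref{E:relation integral and multiplication} to write $S_g = M_g - V_g - g(0)\delta_0$. Since $g \in H^\infty$, the operator $M_g$ restricted to $H^2$ coincides with the Toeplitz operator $T_g$, which satisfies the exact identity $S^{\ast n} T_g S^n = T_g$ for every $n$. The rank--one functional $\delta_0$ annihilates $S^n(H^2)$ for $n \ge 1$ since every element of $S^n(H^2)$ vanishes at the origin, so $\delta_0 S^n = 0$. Combining these with the previous step,
$$
S^{\ast n} S_g S^n \;=\; T_g \;-\; S^{\ast n} V_g S^n \;\xrightarrow{\mathrm{WOT}}\; T_g,
$$
so $S_g$ is WAT with symbol $g$. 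The only genuinely new computation in the whole argument is the identification of $D_n$ as a diagonal contraction and its SOT convergence to the identity; this is controlled by dominated convergence and is the only potential source of subtlety, everything else being an assembly of the two identities from Section~2 with the trivial observation that honest Toeplitz operators are fixed by $T \mapsto S^{\ast} T S$.
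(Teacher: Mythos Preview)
Your argument is correct. For $V_g$, however, your route differs from the paper's proof of this theorem: the paper shows weak convergence directly by combining uniform boundedness with the pointwise estimate
\[
\bigl|S^{*n}V_gS^nf(z)\bigr|\le \frac{1}{n+1}\sup_{t\in[0,1]}|f(tz)g'(tz)|\to 0,
\]
whereas you invoke the commutator formula to write $S^{*n}V_gS^n=V_g-D_nV_g$ with $D_n=S^{*n}V_{z^n}$ the diagonal multiplier $z^k\mapsto\frac{n}{n+k}z^k$, and then use dominated convergence on coefficients to get $D_n\to\mathrm{id}$ in SOT. This is precisely the argument the paper uses for the \emph{strong} asymptotic Toeplitz result (Theorem~\ref{T:SAT}), so you have in fact proved SOT convergence and hence both theorems at once; the paper's pointwise estimate is more elementary but only yields WOT. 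For $S_g$ the two proofs are essentially the same: both reduce to the $V_g$ case via \eqref{E:relation integral and multiplication} together with $S^{*}M_gS=M_g$ and $\delta_0S=0$.
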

\begin{proof} 
We start with the $V_g$ operator. Let us consider $f \in H^2$. We have to check that the sequence 
$$
S^{n*}\,V_{g} \,S^{n}f(z)=\frac{1}{z^{n}}\int_{0}^{z}u^{n}f(u)g'(u)\, du
$$
converges weakly to $0$ in $H^2$. This will follow if we show that it is bounded and converges to zero point-wise.  Clearly $\{S^{n*}\,V_{g}\, S^{n}f\}_n$  is bounded on $H^2$. For every $z \in \mathbb D$, we have that
\begin{align*}
\lim_{n \to \infty}\left| S^{n*}\,V_{g}\,S^{n}f(z)\right|&= \lim_{n \to \infty} \left|\frac{1}{z^{n}}\int_{0}^{z}u^{n}f(u)g'(u)\,du\right|\\
&=\lim_{n \to \infty} \left|z\int_{0}^{1}t^{n}f(tz)g'(tz)\,dt\right|\\
&\leq \lim_{n \to \infty} \int_{0}^{1}t^{n}|f(tz)g'(tz)|\,dt\\
&\leq \lim_{n \to \infty} \frac{1}{n+1} \sup\{|f(tz)g'(tz)|: t\in[0,1]\}=0.
\end{align*}
Consequently, $S^{n*}\,V_{g} \,S^{n}f$ converges weakly to $0$.

\vspace{11 pt}\noindent 
For $S_g$ we work as follows. Due to \eqref{E:relation integral and multiplication}, we have that
\begin{align*}
    S^*\, S_g\, S&= S^*\, M_g\, S- S^*\, V_g\, S- g(0)S^*\,\delta_0\, S=M_g-S^*\, V_g\, S.
\end{align*}
Thanks to the previous result, for every $f, h \in H^2$,
\begin{align*}
\lim_{n\to \infty}    \left\langle S^{*n}\, S_g\, S^nf,h\right\rangle &= \left\langle M_gf,h\right\rangle-\lim_{n \to \infty}\left\langle S^{*n}\, V_g\, S^nf,h\right\rangle=\left\langle M_gf,h\right\rangle.
\end{align*}
The theorem is now proved.\\
\end{proof}

\begin{thm}\label{T:SAT}
The integral operators $V_g$ and $S_g$ are always strongly asymptotically Topelitz. The strong asymptotic symbol of $V_g$ is $0$ while the strong asymptotic symbol of $S_g$ is $g$.
\end{thm}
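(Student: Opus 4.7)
The plan is to establish the $V_g$ case via an explicit formula coming from the commutator identity \eqref{Commutator formula}, and then to deduce the $S_g$ case using the decomposition \eqref{E:relation integral and multiplication}.

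For $V_g$, I would rearrange \eqref{Commutator formula} as $V_g S^n f = S^n V_g f - V_{z^n} V_g f$, apply $S^{*n}$, and use $S^{*n}S^n = \mathrm{id}_{H^2}$ to get
$$S^{*n} V_g S^n f = V_g f - S^{*n} V_{z^n} V_g f.$$
Writing the Taylor expansion $V_g f(z) = \sum_{k \geq 1} a_k z^k$ (note $a_0 = 0$ since $V_g f(0) = 0$), termwise integration of $V_{z^n} h(z) = \int_0^z h(u)\, n u^{n-1}\, du$ gives
$$V_{z^n} V_g f(z) = n \sum_{k \geq 1} \frac{a_k}{k+n} z^{k+n},$$
so that $S^{*n} V_{z^n} V_g f(z) = n \sum_{k \geq 1} \frac{a_k}{k+n} z^k$ and consequently
$$S^{*n} V_g S^n f(z) = \sum_{k \geq 1} \frac{k\, a_k}{k+n}\, z^k.$$
Its squared $H^2$-norm is $\sum_{k \geq 1} |a_k|^2 \frac{k^2}{(k+n)^2}$. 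Since $V_g f \in H^2$ the series $\sum_k |a_k|^2$ is summable, each factor $\tfrac{k^2}{(k+n)^2}$ is bounded by $1$ and tends to $0$ as $n \to \infty$, so dominated convergence (for the counting measure) yields $\|S^{*n} V_g S^n f\|_{H^2} \to 0$. Thus $V_g$ is SAT with symbol $0$.

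For $S_g$, I would apply \eqref{E:relation integral and multiplication} to write
$$S^{*n} S_g S^n = S^{*n} M_g S^n - S^{*n} V_g S^n - g(0) S^{*n} \delta_0 S^n.$$
For $n \geq 1$ the point-evaluation term vanishes since $\delta_0(S^n f) = (z^n f)(0) = 0$. For the multiplication term, $g \in H^\infty$ gives $gf \in H^2$, so $M_g S^n f = z^n(gf) \in z^n H^2$ and hence $S^{*n} M_g S^n f = gf = T_g f$. Combined with the $V_g$ estimate just proved, this yields $S^{*n} S_g S^n f \to T_g f$ in $H^2$-norm, so $S_g$ is SAT with symbol $g$. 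The main obstacle is the first step: extracting from \eqref{Commutator formula} the explicit Cesàro-type formula for $S^{*n} V_g S^n f$ and verifying that the resulting series admits a summable dominator; once that is done the $S_g$ conclusion is a clean reduction.
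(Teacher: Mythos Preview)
Your argument is correct and follows essentially the same route as the paper: both use \eqref{Commutator formula} to write $S^{*n}V_gS^n f = V_gf - S^{*n}V_{z^n}V_gf$, expand in Taylor coefficients to obtain a Ces\`aro-type factor $\tfrac{k}{k+n}$, and invoke dominated convergence; the $S_g$ case is then reduced via \eqref{E:relation integral and multiplication} and $S^{*n}M_gS^n=M_g$ exactly as in the paper. Your handling of the $n$-th iterate in the $S_g$ part is in fact slightly more explicit than the paper's.
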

\begin{proof}
Theorem \ref{T:WAT Tg} implies that if $V_g$ is SAT, then the asymptotic symbol should be $0$. 
For this reason, it is enough proving that for every $g\in \BMOA$ 
$$
\lim_{n\to\infty}\|S^{* n}\,V_g\,S^{n}f\|_{H^2}=0.
$$
Due to \eqref{Commutator formula}, we know that
$$
S^{* n}\,V_g\,S^{n}=S^{* n}\, S^n\, V_g-S^{* n}\, V_{z^n}\, V_g=V_g-S^{* n}\, V_{z^n}\, V_g.
$$
Consequently, in order to prove the theorem, it is enough noticing that for every $f(z)=\sum_{l}a_lz^l \in H^2$, 
$$
\lim_{n \to \infty} \|f-S^{* n}\, V_{z^n}f\|_{H^2}=0.
$$
Indeed
\begin{align*}
 f(z)-S^{* n}\, V_{z^n}f(z)=& \sum_{l}a_lz^l -\dfrac{1}{z^n}\int_{0}^z nu^{n-1}\sum_l a_l u^l\, du  \\
 =& \sum_{l}a_lz^l - \sum_l a_l \dfrac{1}{z^n} \dfrac{n}{n+l+1}z^{n+l}\\
 =& \sum_{l}a_lz^l - \sum_l a_l \dfrac{n}{n+l+1}z^{l}= \sum_{l}a_l \dfrac{l+1}{n+l+1}z^l
\end{align*}
and, by applying dominated convergence,
\begin{align*}
\lim_{n\to \infty}\| f-S^{* n}\, V_{z^n}f\|_2^2= \lim_{n\to \infty} \sum_{l}|a_l|^2 \Big|\dfrac{l+1}{n+l+1}\Big|^2=0 , 
\end{align*}
which proves the first implication.

\vspace{11 pt}\noindent
For the $S_g$ operator we work as follows.
Because of \eqref{E:relation integral and multiplication},
we have that
\begin{align*}
    S^*\, S_g\, S&= S^*\, M_g\, S- S^*\, V_g\, S- g(0)S^*\,\delta_0\, S=M_g-S^*\, V_g\, S.
\end{align*}
Due to the previous argument, we have that 
\begin{align*}
\lim_{n\to \infty}    S^{*n}\, S_g\, S^nf&= M_gf-\lim_{n \to \infty}S^{*n}\, V_g\, S^nf=M_gf.
\end{align*}
The theorem is now proved.\\
\end{proof}

\vspace{11 pt}
\noindent 
In order to characterize the integral operators which are uniformly asymptotically Hankel, we need the description provided in \cite[Theorem 4.3]{FEINTUCH19901}.
\begin{thm}[Feintuch]
   The operator $ T \in \mathcal{B}(H^2)$ is uniformly asymptotically Hankel  if and only if 
   $$
   T=T_{b}+U,
   $$
   where $b\in H^\infty+C$, where $C$ is the class of continuous functions on $\mathbb{T}$, and $U \in \mathcal{C} + \mathcal{K}(H^2)$, where  $\mathcal{C}$ denotes the (weakly closed) algebra of bounded linear operators in $H^2$ which have a lower triangular matrix representation with respect to the standard basis.
\end{thm}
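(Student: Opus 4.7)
The plan is to analyze $H_n(T) = J_n T S^{n+1}$ via matrix entries in the standard basis $\{e_j\}_{j \ge 0}$ and to extract the structure of $T$ from the convergence of this sequence. Writing $T_{pq} := \langle T e_q, e_p \rangle$, a direct computation gives $(H_n(T))_{ik} = T_{n-i,\,n+1+k}$ for $0 \le i \le n$ and $0$ otherwise. Consequently, the $m$-th anti-diagonal of $H_n(T)$ samples the $(m+1)$-th super-diagonal of $T$, so convergence of $H_n(T)$ in norm is equivalent to uniform control on the strictly upper triangular entries of $T$; the candidate limit $H_b$ then has anti-diagonal entries equal to $\hat b(-m-1)$.

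For sufficiency, I would check each piece of $T = T_b + U$ separately. If $U \in \mathcal{C}$ is lower triangular, then $U_{pq} = 0$ for $q > p$, so $H_n(U) = 0$ for every $n$. If $K \in \mathcal{K}(H^2)$, then $S^{n+1} \to 0$ strongly combined with the compactness of $K$ upgrades to $\|K S^{n+1}\| \to 0$, whence $H_n(K) \to 0$ uniformly. For $T_b$ with $b \in H^\infty + C$, decompose $b = h + c$: the $H^\infty$ part $T_h$ is lower triangular and is absorbed into $\mathcal{C}$, while for continuous $c$ one approximates in $C(\mathbb{T})$ by trigonometric polynomials $p_\varepsilon$, exploits that $T_{p_\varepsilon}$ has only finitely many nonzero diagonals so that $H_n(T_{p_\varepsilon})$ stabilizes after finitely many $n$ to a fixed finite-rank Hankel piece, and passes to the limit via $\|T_{c - p_\varepsilon}\| \le \|c - p_\varepsilon\|_\infty$.

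For the converse, assume $H_n(T) \to H_b$ in norm for some bounded Hankel operator $H_b$. Uniform convergence forces the sequence $\{T_{p,\,p+m+1}\}_p$ to converge along each $(m+1)$-th super-diagonal, producing Fourier coefficients $\hat b(-m-1)$ of a bounded function $b$; by Hartman's theorem, the requirement that the residual upper-triangular operator be compact-Hankel pins the symbol class down to $H^\infty + C$. Setting $T' := T - T_b$, the strictly upper triangular part of $T'$ has super-diagonals tending uniformly to zero in a Hankel-type norm, so it can be written as a compact operator, while the lower triangular part of $T'$ lies in $\mathcal{C}$ by construction, yielding $T = T_b + U$ with $U \in \mathcal{C} + \mathcal{K}(H^2)$. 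The principal obstacle is precisely this converse: turning the block-truncated anti-diagonal convergence of $H_n(T)$ into a genuine Toeplitz-plus-compact structure on the upper-triangular part of $T$, where the delicate bookkeeping around the cutoff $i \le n$ and the use of Hartman's theorem to identify the symbol class $H^\infty + C$ constitute the technical heart of the argument.
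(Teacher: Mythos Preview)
The paper does not actually prove this theorem: it is quoted verbatim as Feintuch's result \cite[Theorem~4.3]{FEINTUCH19901}, with no argument given, and is then used as a black box (together with the companion fact \cite[Theorem~4.2]{FEINTUCH19901} that $\|H_n(T)\|\to 0$ iff $T\in\mathcal{C}+\mathcal{K}(H^2)$). So there is no ``paper's own proof'' to compare your proposal against.

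On the merits of your sketch itself: the matrix-entry identity $(H_n(T))_{ik}=T_{n-i,\,n+1+k}$ is correct, and your sufficiency argument is essentially complete. In the converse, however, your justification for $b\in H^\infty+C$ is muddled. You write that ``the requirement that the residual upper-triangular operator be compact-Hankel pins the symbol class down,'' but at that stage of the argument you have not yet established any compactness. The clean way to see it is structural: since $J_n$ vanishes on $(I-P_n)H^2$, every $H_n(T)$ satisfies $H_n(T)=P_nH_n(T)$, so the norm limit $H_b$ obeys
\[
\|(I-P_n)H_b\|=\|(I-P_n)(H_b-H_n(T))\|\le\|H_b-H_n(T)\|\to 0,
\]
forcing $H_b$ to be compact, and then Hartman gives $b\in H^\infty+C$. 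Once this is in hand, your final step should also be sharpened: rather than asserting vaguely that the upper-triangular part of $T'=T-T_b$ ``can be written as a compact operator,'' observe that $H_n(T_b)=P_nH_b\to H_b$ uniformly (again by compactness of $H_b$), hence $H_n(T')\to 0$ uniformly, and invoke the companion characterization $\{H_n(U)\to 0\}\Longleftrightarrow U\in\mathcal{C}+\mathcal{K}(H^2)$ to conclude. With these two fixes the argument goes through; as written, the converse direction has a genuine gap at both points.
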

\noindent Moreover, it is know that \cite[Theorem 4.2]{FEINTUCH19901}
$$
\lim_{n\to\infty}\|H_{n}(T)\|_{2}=0,\,\,\mbox{ if and only if }\,\, T\in \mathcal{C} + \mathcal{K}(H^2).
$$
The above discussion implies the following theorem. 
\begin{thm}
  The integral operators $V_{g}$ and $S_g$ are always uniformly asymptotically Hankel and the asymptotic symbol is always $0$.
\end{thm}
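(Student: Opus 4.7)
The plan is to appeal directly to the characterization cited from Feintuch just before the statement: an operator $T$ satisfies $\|H_n(T)\|_2 \to 0$ if and only if $T \in \mathcal{C} + \mathcal{K}(H^2)$, where $\mathcal{C}$ consists of operators whose matrix in the standard basis $\{z^n\}_{n \geq 0}$ is lower triangular. So it suffices to show that both $V_g$ and $S_g$ actually lie in $\mathcal{C}$ itself, after which the limit being zero forces the asymptotic symbol (a function in $L^\infty$) to be $0$.

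First I would compute the action of $V_g$ and $S_g$ on the monomials. Writing $g(z) = \sum_{k \geq 0} b_k z^k$, a direct integration yields
\begin{equation*}
V_g(z^n) = \sum_{k \geq 1} \frac{k\, b_k}{n+k}\, z^{n+k}, \qquad S_g(z^n) = n \sum_{k \geq 0} \frac{b_k}{n+k}\, z^{n+k}.
\end{equation*}
In both cases $V_g(z^n)$ and $S_g(z^n)$ belong to $\overline{\operatorname{span}}\{z^m : m \geq n\}$, so the matrix representation with respect to the standard basis is lower triangular. Hence $V_g, S_g \in \mathcal{C}$.

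Next, since $V_g$ and $S_g$ are bounded on $H^2$ (for $g \in \BMOA$ and $g \in H^\infty$ respectively, by the Pommerenke--Aleman--Cima--Siskakis--Anderson theorem recalled in Section~2), the cited criterion applies and gives $\|H_n(V_g)\|_2 \to 0$ and $\|H_n(S_g)\|_2 \to 0$. Thus both operators are UAH, and the uniform limit of $H_n(V_g)$ and $H_n(S_g)$ is the zero operator; equivalently, the asymptotic Hankel symbol is the zero function.

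I do not anticipate a real obstacle here: once the matrix triangularity is written out, everything is a straight invocation of the quoted Feintuch theorem. The only small point to check is the boundedness hypothesis needed to invoke the Feintuch criterion, which is exactly the Pommerenke--Aleman--Cima--Siskakis--Anderson boundedness characterization stated earlier in this preliminaries section; no new estimates are required.
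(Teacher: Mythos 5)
Your proposal is correct and follows essentially the same route as the paper: the paper simply cites Feintuch's result that $T\in\mathcal{C}+\mathcal{K}(H^2)$ forces $\|H_n(T)\|\to 0$, and the underlying reason is exactly the lower triangularity of $V_g$ and $S_g$ that you verify explicitly from their action on monomials. Your write-up just makes that verification, and the boundedness hypothesis, explicit.
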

\begin{proof}
   
The proof is a consequence of \cite[Corollary 5.2]{FEINTUCH19901}.\\
\end{proof}

\section{Essential Toeplitzness and Hankelness}
\noindent 
In this section we characterize the integral operators which are ess Toep and ess Hank. 
 \begin{lem}\label{L:other defn ess Toep and Hank}
The operator $T$ is ess Hank if and only if $T-STS$ or, equivalently, $T- S^{*}TS^{*}$ is compact.
The operator $T$ is ess Toep if and only if $S\, T-T\, S$ or, equivalently, $S^{*}\, T- T\, S^{*}$ is compact. 
 \end{lem}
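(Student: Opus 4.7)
The plan is to exploit the basic relation \eqref{Unitary S}: we have $S^{*}S=\mathrm{id}_{H^{2}}$ exactly, while $SS^{*}=\mathrm{id}_{H^{2}}-P_{0}$, where $P_{0}$ is the rank-one projection onto the constants. Hence $SS^{*}\equiv \mathrm{id}_{H^{2}}$ modulo compacts, i.e. $S$ is unitary in the Calkin algebra. Every equivalence in the lemma will then follow by multiplying the defining commutator on one side by $S$ or $S^{*}$ and keeping track of the rank-one corrections.

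For the Hankel part, I would start from the definition: $S^{*}T-TS\in\mathcal{K}(H^{2})$. Multiplying on the left by $S$ gives $SS^{*}T-STS$, which differs from $T-STS$ by $P_{0}T$, a rank-one (hence compact) operator; this yields $T-STS\in\mathcal{K}(H^{2})$. The converse multiplies $T-STS$ on the left by $S^{*}$ and invokes the \emph{exact} identity $S^{*}S=\mathrm{id}_{H^{2}}$ to recover $S^{*}T-TS$. To reach the equivalent formulation $T-S^{*}TS^{*}\in\mathcal{K}(H^{2})$, I would instead multiply $S^{*}T-TS$ on the right by $S^{*}$ and absorb $TSS^{*}\equiv T$ mod compacts; the reverse direction multiplies $T-S^{*}TS^{*}$ on the right by $S$.

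For the Toeplitz part, the only substantive claim is the equivalence $ST-TS\in\mathcal{K}(H^{2})$ iff $S^{*}T-TS^{*}\in\mathcal{K}(H^{2})$. I would sandwich $ST-TS$ between $S^{*}$ on both sides: the expression $S^{*}(ST-TS)S^{*}=TS^{*}-S^{*}TSS^{*}$ coincides, modulo $\mathcal{K}(H^{2})$, with $TS^{*}-S^{*}T$, again because $SS^{*}\equiv\mathrm{id}_{H^{2}}$ mod compacts. The converse sandwiches $S^{*}T-TS^{*}$ between $S$'s on both sides and uses $S^{*}S=\mathrm{id}_{H^{2}}$ together with the same compact correction to recover $ST-TS$ modulo $\mathcal{K}(H^{2})$.

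The main ``obstacle'' here is really only bookkeeping: at each step one must verify that the discrepancy introduced by replacing $SS^{*}$ with $\mathrm{id}_{H^{2}}$ is either $P_{0}T$ or $TP_{0}$, both of rank one. No analytic input beyond the behaviour of $S$ in the Calkin algebra is required, and in particular none of the $\BMOA$/$\mathrm{VMOA}$ theory from the earlier sections is invoked; the lemma is really a statement about the Calkin-algebra unitarity of $S$.
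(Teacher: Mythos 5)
Your argument is correct and is essentially the same one the paper relies on: the paper simply cites \cite[p.~743]{MartnezAvendao2002}, where the equivalences are obtained by exactly this device of multiplying the defining commutator by $S$ or $S^{*}$ and using that $S^{*}S=\mathrm{id}_{H^{2}}$ while $SS^{*}=\mathrm{id}_{H^{2}}-P_{0}$ with $P_{0}$ of rank one. Your bookkeeping of the rank-one corrections is accurate throughout, so the proposal is a complete, self-contained proof of the lemma.
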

\begin{proof}
See \cite[p. 743]{MartnezAvendao2002}
\end{proof}
\noindent 
As already said, the set ess Toep is a sub-algebra of $\mathcal{B}(H^2)$, while the set ess Hank is only a norm-closed, self-adjoint, vector subspace of $\mathcal{B}(H^2)$.

\begin{thm}\label{T: ess hank}
The integral operator $V_g$ is ess Hank if and only if  $V_{g}(1-z^{2})\in \text{VMOA}$. On the other hand, the integral operator $S_{g}$ is never ess Hank.
\end{thm}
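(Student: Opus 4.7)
The plan is to apply Lemma \ref{L:other defn ess Toep and Hank} and reduce $V_g - SV_g S$ and $S_g - SS_g S$ modulo $\mathcal{K}(H^2)$ to operators whose compactness can be recognized via Theorem \ref{T:compactness}.

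For $V_g$, I would apply \eqref{Commutator formula} with $n=1$ to write $V_g S = SV_g - V_z V_g$, hence $SV_g S = S^2 V_g - S V_z V_g$. A further application of \eqref{Commutator formula} with $n=2$ gives $S^2 V_g = V_{z^2} V_g + V_g S^2$. Combining,
\[
V_g - SV_g S \;=\; V_g(I - S^2) \;+\; S V_z V_g \;-\; V_{z^2} V_g.
\]
Since $z, z^2 \in \text{VMOA}$, Theorem \ref{T:compactness} implies $V_z$ and $V_{z^2}$ are compact, so the last two terms lie in $\mathcal{K}(H^2)$. Setting $h := V_g(1-z^2)$, differentiation yields $h'(u) = (1-u^2) g'(u)$, so
\[
V_g(I - S^2) f(z) \;=\; \int_0^z (1-u^2) f(u) g'(u)\, du \;=\; V_h f(z).
\]
Hence $V_g - SV_g S \equiv V_h \mod \mathcal{K}(H^2)$, and by Theorem \ref{T:compactness} this operator is compact if and only if $h = V_g(1-z^2) \in \text{VMOA}$.

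For $S_g$, relation \eqref{E:relation integral and multiplication} gives $S_g = M_g - V_g - g(0) \delta_0$, so
\[
S_g - SS_g S \;=\; (M_g - SM_g S) \;-\; (V_g - SV_g S) \;-\; g(0)(\delta_0 - S \delta_0 S).
\]
A direct check shows $M_g - SM_g S = M_{(1-z^2)g}$, while $(S\delta_0 S)f = S(\delta_0(zf)) = 0$, so $\delta_0 - S\delta_0 S = \delta_0$ is rank one. Combined with the analysis above, this yields $S_g - SS_g S \equiv M_{(1-z^2)g} - V_h \mod \mathcal{K}(H^2)$, with $h = V_g(1-z^2)$.

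The main obstacle is showing this residual operator is never compact when $g \not\equiv 0$. I would test against the weakly null sequence $e_n(z) = z^n$ and compute the Taylor expansion of $(M_{(1-z^2)g} - V_h) e_n$ directly: writing $g(z) = \sum_k g_k z^k$, the coefficient of $z^{n+j}$ involves factors of the form $\tfrac{n}{n+j}$ and $\tfrac{n+2}{n+j}$ and converges as $n \to \infty$ to the $j$-th Taylor coefficient of $(1-z^2) g$, with an $n$-independent majorant $|g_j| + |g_{j-2}|$ that lies in $\ell^2$ because $g \in H^\infty \subset H^2$. Dominated convergence applied coefficientwise then gives $\|(M_{(1-z^2)g} - V_h) e_n\|_{H^2}^2 \to \|(1-z^2)g\|_{H^2}^2 > 0$, contradicting the fact that a compact operator must send weakly null sequences to norm-null ones.
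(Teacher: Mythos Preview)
Your proof is correct and, for $V_g$, follows the same route as the paper: reduce $V_g-SV_gS$ via the commutator identity \eqref{Commutator formula} to $V_g(I-S^2)=V_h$ modulo compacts, then invoke Theorem~\ref{T:compactness}. The only difference is cosmetic---the paper applies \eqref{Commutator formula} once (with $n=1$ to the input $Sf$) rather than twice.

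For $S_g$ the two arguments differ slightly in organization. The paper computes $(S_g-SS_gS)e_n$ directly from the definition of $S_g$ and observes that its lowest-order Taylor coefficient is $\tfrac{n}{n+k_0}a_{k_0}$, giving an immediate lower bound $|a_{k_0}|$ for the limiting norm. You instead first decompose via \eqref{E:relation integral and multiplication} to obtain $S_g-SS_gS\equiv M_{(1-z^2)g}-V_h$ modulo compacts, and then test on monomials with a dominated-convergence argument yielding the sharper limit $\|(1-z^2)g\|_{H^2}^2$. Your route is a touch more conceptual and reuses the $V_g$ analysis; the paper's is shorter because it avoids the decomposition and the dominated-convergence step, needing only the single leading coefficient. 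Both rest on the same idea: test the putatively compact operator on the weakly null sequence $\{z^n\}$ and exhibit a nonzero norm limit.
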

\begin{proof}
We start with the integral operator $V_g$.
We consider  
$$
V_{g}f(z)-S\, V_{g}\, Sf(z)=\int_{0}^{z}f(u)g'(u)\,du-
z\int_{0}^{z}f(u)ug'(u)\,du.
$$
By applying  the  formula (\ref{Commutator formula}) to $S\, V_g$, we have that 
$$
S\, V_{g}\, Sf=V_{g}\, S^2f+V_z\, V_{g}\, Sf.
$$ 
The operator $V_z\, V_{g}\, Sf$ is compact. Therefore, we have 
\begin{align*}
 V_{g}f(z)-S\, V_{g}\, S f(z)&\equiv V_{g}(f-S^2f)(z)\text{ mod } \mathcal{K}(H^2)\\
 &\equiv\int_{0}^{z}f(u)(1-u^{2})g'(u)\,du\text{ mod } \mathcal{K}(H^2)
\end{align*}
which proves the first part of the theorem.\\

\vspace{11 pt}\noindent
Let us now consider the operator $S_g$. Let us assume that $g(z)=\sum_{n=0}^{\infty} a_{n}z^{n}$ and, by contradiction, that $S_{g}$ is ess Hank.
Let $a_{k_{0}}$ be the first non-zero Taylor coefficient. Let us consider $f_{n}(z)=z^{n}, n>1$. Then $f_{n}\to 0$ weakly in $H^2$ as $n\to \infty$. Thus 
\begin{equation}\label{norm to 0 Sg}
    \|(S_{g}-S\, S_{g}\, S)f_{n}\|_{H^2}\to 0 \text{ as } n\to \infty. 
\end{equation}
However, 
\begin{align*}
  (S_{g}-S\, S_{g}\, S)f_{n}(z)&=\int_{0}^{z}g(u)(nu^{n-1})\, du -z\int_{0}^{z} g(u)((n+1)u^{n})du\\
  &=n\int_{0}^{z}\sum_{k=0}^{\infty}a_{k}u^{k+n-1}\,du-z(n+1)\int_{0}^{z}\sum_{k=0}^{\infty}a_{k}u^{k+n}\,du\\
  &=\sum_{k=0}^{\infty}\frac{n}{k+n} a_{k} z^{k+n}-\sum_{k=0}^{\infty}\frac{n+1}{k+n+1} a_{k} z^{k+n+2}\\
  &=\frac{n}{n+k_0}a_{k_{0}}z^{k_{0}+n}+\text{higher  terms.}
  \end{align*}
Consequently
$$\lim_{n\to\infty}\|(S_{g}-S\, S_{g}\, S)f_{n}\|_{H^2}\geq |a_{k_0}|$$
which contradicts (\ref{norm to 0 Sg}).
The theorem is now proved.
\end{proof}

\noindent From Theorem \ref{T: ess hank} it is clear that the Cesaro operator, that is, $V_{-\log(1-z)}$ is essentially Hankel since
$$
V_{-\log(1-\cdot)}(1-z^2)=\int_0^z \left(1+\xi\right)d\xi \in \text{VMOA}.
$$
Moreover, even if $V_{-\log(\alpha-z)}$ with $\alpha \in \mathbb{T}$ is unitarly equivalent to $V_{-\log(1-z)}$,  the operator $V_{-\log(\alpha-z)}$ is  essentially Hankel only for $a=\pm1$.

\begin{thm}\label{T:Ess Toep}
The integral operators $V_g$ and $S_g$ are always ess Toep.
\end{thm}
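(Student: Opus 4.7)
The plan is to verify the essential Toeplitzness directly from the commutator condition $ST-TS\in\mathcal{K}(H^{2})$, exploiting the commutator identity (\ref{Commutator formula}) for $n=1$ together with the decomposition (\ref{E:relation integral and multiplication}).

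First, I treat $V_{g}$. Setting $n=1$ in (\ref{Commutator formula}) gives
$$
S\,V_{g}f-V_{g}\,Sf = V_{z}\,V_{g}f,\qquad f\in H^{2},
$$
that is, $[S,V_{g}]=V_{z}V_{g}=V\,V_{g}$, where $V=V_{z}$ is the classical Volterra operator. Since $z\in \text{VMOA}$, Theorem \ref{T:compactness} implies that $V$ is compact on $H^{2}$, and as $V_{g}$ is bounded on $H^{2}$ (by hypothesis $g\in \text{BMOA}$), the product $V\,V_{g}$ is compact. Thus $S\,V_{g}-V_{g}\,S\in\mathcal{K}(H^{2})$, and by Definition \ref{D:ess Toeplitz} (equivalently, Lemma \ref{L:other defn ess Toep and Hank}), $V_{g}$ is essentially Toeplitz.

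For $S_{g}$, I invoke the identity (\ref{E:relation integral and multiplication}):
$$
S_{g}=M_{g}-V_{g}-g(0)\delta_{0},
$$
so
$$
[S,S_{g}]=[S,M_{g}]-[S,V_{g}]-g(0)[S,\delta_{0}].
$$
The first commutator vanishes since $S$ and $M_{g}$ both act as multiplication operators on $H^{2}$, so they commute. The last commutator is a rank-one operator: $(S\delta_{0})f=f(0)z$ while $(\delta_{0}S)f=0$, so $[S,\delta_{0}]$ has range contained in the span of $z$ and is therefore compact. The middle commutator $[S,V_{g}]$ is compact by the argument above. Summing these, $[S,S_{g}]\in\mathcal{K}(H^{2})$, and hence $S_{g}$ is essentially Toeplitz.

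There is essentially no obstacle: once one observes that the commutator formula (\ref{Commutator formula}) identifies $[S,V_{g}]$ with $V\cdot V_{g}$ and that the classical Volterra operator $V$ is compact on $H^{2}$, both statements follow immediately, with the $S_{g}$ case reduced to the $V_{g}$ case via (\ref{E:relation integral and multiplication}). The only mild subtlety is handling the point-evaluation term $g(0)\delta_{0}$, but its commutator with $S$ has rank at most one and causes no trouble.
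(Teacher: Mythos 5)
Your proof is correct and follows essentially the same route as the paper: the commutator identity (\ref{Commutator formula}) with $n=1$ gives $[S,V_g]=V_z V_g$, which is compact, and the $S_g$ case is reduced to this via the decomposition (\ref{E:relation integral and multiplication}), with the multiplication commutator vanishing and the $\delta_0$ term being finite rank. The only difference is cosmetic: you spell out why $V_zV_g$ is compact (via $z\in\text{VMOA}$ and Theorem \ref{T:compactness}) where the paper simply asserts it.
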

\begin{proof}
Again we start with $V_g$. By using the  formula \eqref{Commutator formula}, we have that
\begin{align*}
S\, V_{g}f-V_{g}\, Sf&=V_{g}\,Sf+V_z\, V_{g}f-V_{g}\, Sf=V_z\,V_{g}f.
\end{align*}
The last operator is compact, which finishes the first part of the proof of the theorem.

\vspace{11 pt}\noindent 
We consider now the $S_g$ operator. Through the identity 
$$
V_{g}+S_{g}-M_g+g(0)\,\delta_{0}=0,
$$
we  have
\begin{align*}
0&=S(V_{g}+S_{g}-M_g+g(0)\,\delta_{0})-(V_{g}+S_{g}-M_g+g(0)\,\delta_{0})S\\
&=SV_{g}-V_{g}S+S S_{g}-S_{g}S +g(0)S\delta_{0}\\
&\equiv S S_{g}-S_{g}S \text{ mod } \mathcal{K}(H^2).
\end{align*}
The theorem is now proved.\\
\end{proof}

\noindent It easy to prove that $V_g \not\in \mathcal{E}(\mathcal{T})$, when $V_g$ is not compact.
\begin{cor}
Let $g\in BMOA\setminus VMOA$. Then $V_g\not\in \mathcal{E}(\mathcal{T})$.
\end{cor}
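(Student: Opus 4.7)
I would argue by contradiction: suppose $V_g \in \mathcal{E}(\mathcal{T})$ with $g \in \text{BMOA}\setminus\text{VMOA}$, and derive that $V_g$ must be compact—contradicting Theorem \ref{T:compactness}.

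First I would observe that, since $\mathcal{T}$ contains every analytic Toeplitz operator $M_a$ with $a \in H^\infty$, the hypothesis forces $[V_g, M_a]$ to be compact for every $a \in H^\infty$. A Fubini-type computation (in the spirit of the proof of Theorem \ref{T:Ess Toep}) identifies this commutator as
\[
[V_g, M_a] = -V_a V_g, \qquad a \in H^\infty,
\]
so $V_a V_g \in \mathcal{K}(H^2)$ for every analytic $a$. The next step is to leverage this (together with the analogous compactness one obtains for coanalytic and mixed Toeplitz symbols) to show $V_g$ is necessarily of the form $T_\phi + K$, $\phi \in L^\infty$, $K \in \mathcal{K}(H^2)$, i.e.\ $V_g$ is UAT in the sense of Feintuch's Theorem \ref{T:Feintuch}. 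Theorem \ref{UAT Tg and S_g} then yields $V_g$ compact, producing the contradiction.

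The main obstacle is this last reduction step, namely the rigidity $\mathcal{E}(\mathcal{T}) \subset \{T_\phi + K : \phi \in L^\infty,\ K \in \mathcal{K}(H^2)\}$. One option is to invoke a Davidson-type structural description of the essential commutant of the Toeplitz algebra. Alternatively, one can argue directly by exploiting the explicit strictly lower-triangular matrix of $V_g$, whose subdiagonal entries are $(V_g)_{n+k,n} = kg_k/(n+k)$, together with the compactness of $V_a V_g$ for well-chosen bounded analytic symbols $a$—for instance dilates $g_r(z)=g(rz)$ or a singular inner function—to deduce directly the $\text{VMOA}$ decay of the Fourier coefficients of $g$, yielding the same contradiction.
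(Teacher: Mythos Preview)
Your strategy matches the paper's: the key step you flag as the ``main obstacle''---the rigidity $\mathcal{E}(\mathcal{T}_a)\subset\{T_\phi+K\}$---is exactly what the paper invokes, citing \cite[Theorem~2]{Davidson1977}, and then Theorem~\ref{UAT Tg and S_g} closes the argument (the paper phrases it contrapositively: $V_g$ is not $T_\phi+K$, hence by Davidson some $[M_b,V_g]$ is non-compact). Your commutator identity $[V_g,M_a]=-V_aV_g$ is correct but unnecessary once Davidson is in hand; your second alternative via dilates or inner functions is not pursued in the paper and, as written, is too vague to stand on its own.
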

\begin{proof}
We can actually prove that $V_g \not\in \mathcal{E}(\mathcal{T}_a) $, where $\mathcal{T}_a$ is the set made by analytic Toeplitz operators. Indeed, since Theorem \ref{D:UAT} proves that $V_g$ is not the sum of a Toeplitz and a compact operator, an application of \cite[Theorem 2]{Davidson1977} shows that there exists always a function $b\in H^\infty$ such that $M_bV_g-V_{g}M_b$ is non-compact. The proof is now completed.\\
\end{proof}

\noindent We highlight that ess Hank has some kind of ideal structure.
\begin{cor}\label{Ideal}
Let $V_g$ and $V_h$ be ess Hank and $V_{k}$ be  ess Toep. Then $V_g\,V_k$ and $V_k\,V_g$ are ess Hank. On the other hand, $V_g\, V_h$ is  ess Toep. 
\end{cor}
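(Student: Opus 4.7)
The plan rests on two earlier facts in the excerpt. First, Theorem \ref{T:Ess Toep} shows that every generalized Volterra operator $V_g$ is automatically ess Toep; consequently the hypothesis ``$V_k$ is ess Toep'' adds no information, and the operators $V_g$ and $V_h$ are simultaneously ess Toep and ess Hank. Second, Lemma \ref{L:other defn ess Toep and Hank} provides the working criteria: $T$ is ess Hank iff $T \equiv STS$ mod $\mathcal{K}(H^2)$, and $T$ is ess Toep iff $ST \equiv TS$ mod $\mathcal{K}(H^2)$. Each of the three assertions then reduces to a short commutator manipulation modulo compacts.

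For the last assertion, that $V_g V_h$ is ess Toep, I would simply invoke the remark in the text that the class of ess Toep operators forms a subalgebra of $\mathcal{B}(H^2)$; since both factors are ess Toep by Theorem \ref{T:Ess Toep}, so is their product.

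For $V_g V_k$ being ess Hank, I would compute
\[
V_g V_k - S (V_g V_k) S = V_g V_k - S V_g (V_k S) \equiv V_g V_k - (S V_g S) V_k = (V_g - S V_g S) V_k \pmod{\mathcal{K}(H^2)},
\]
where the middle equivalence uses $V_k S \equiv S V_k$ (because $V_k$ is ess Toep) and the final expression is compact because $V_g$ is ess Hank. For $V_k V_g$ the mirror identity
\[
V_k V_g - S(V_k V_g) S \equiv V_k (S V_g S) - S V_k V_g S = (V_k S - S V_k) V_g S \pmod{\mathcal{K}(H^2)}
\]
replaces the first $V_g$ by $S V_g S$ modulo a compact (ess Hank of $V_g$) and then invokes compactness of $V_k S - S V_k$ (ess Toep of $V_k$). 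No serious obstacle arises: the uniform pattern is to slide $S$ past the ess Toep factor and then apply the ess Hank identity to the remaining factor, which works equally well on either side of the product.
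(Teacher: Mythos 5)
Your proof is correct, and for the two ess Hank assertions it coincides with the paper's argument: the identity $V_gV_k - S V_g V_k S \equiv (V_g - SV_gS)V_k \bmod \mathcal{K}(H^2)$ is exactly the computation given there, and your mirror identity $(V_kS-SV_k)V_gS$ for $V_kV_g$ supplies the details that the paper dismisses with ``similar computations work.'' Where you genuinely diverge is the third assertion. The paper proves that $V_gV_h$ is ess Toep by a direct manipulation using \emph{only} the ess Hank hypothesis on both factors: it writes $SV_gV_h \equiv SV_gS\,S^*V_h \equiv V_g S^*V_hS^*S \equiv V_gV_hS \bmod \mathcal{K}(H^2)$, i.e.\ it instantiates the general structural fact (from Lemma 2.4 of Mart\'inez-Avenda\~no, which the paper cites) that the product of two essentially Hankel operators is essentially Toeplitz, valid for arbitrary bounded operators. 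You instead observe that by Theorem \ref{T:Ess Toep} every $V_g$ is already ess Toep and that the essential commutant $\mathcal{E}$ is a subalgebra, so the product of any two integral operators is ess Toep regardless of any Hankel hypothesis. Your route is shorter and perfectly valid for the statement as written (and your remark that the hypothesis ``$V_k$ is ess Toep'' is automatic is also correct), but it proves the conclusion for a reason unrelated to the ideal structure the corollary is meant to illustrate; the paper's computation is the one that survives if one replaces the $V$'s by general ess Hank operators.
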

\begin{proof}
This corollary follows from \cite[Lemma 2.4]{MartnezAvendao2002}. However for the sake of completeness, we add here the proof. Indeed
\begin{align*}
    V_g\, V_k -S\, V_g\, V_k\, S\equiv& V_g\, V_k -S\, V_g\, S\, V_k\text{ mod } \mathcal{K}(H^2)\\
    \equiv&\left( V_g-S\, V_g\, S\right)\, V_k \text{ mod } \mathcal{K}(H^2)\\
    \equiv& 0
\end{align*}
 Similar computations work also for $V_k\,V_g $. Finally
\begin{align*}
    S\, V_g\, V_h -V_g\, V_h\, S=& S\, V_g\,S\, S^*\,  V_h - V_g\, V_h\, S \text{ mod } \mathcal{K}(H^2).\\
    \equiv&V_g S^*\,  V_h\, S^* S - V_g\, V_h\, S \text{ mod } \mathcal{K}(H^2)\\
    \equiv& V_g\, V_h\, S-V_g\, V_h\, \text{ mod } \mathcal{K}(H^2)\\
    \equiv&0 \text{ mod } \mathcal{K}(H^2).
\end{align*}

\end{proof}

\section{remark on sub-algebra of $\mathcal{B}(H^2)$}
\noindent
The explicit characterization of elements within the Toeplitz algebra  $\mathcal{T}$ remains an open problem. One of the aim of this paper is  shedding some light on this issue by exploring the potential of integral operators  as non-trivial members of  $\mathcal{T}$. 

\vspace{11 pt}\noindent

We highlight that thanks to Theorem \ref{T:Feintuch}, if an operator $T$ is uniformly asymptotically Toeplitz, then it also needs to be essentially Toeplitz. However this inclusion is strict, due to Theorems \ref{UAT Tg and S_g} and \ref{T:Ess Toep}. Indeed, if $g \in \text{BMOA}\setminus \text{VMOA}$, the operator $V_g$ is ess Toep but not UAT.  The preceding comment are evidence, however weak, that every non-compact $V_g$ does not belong
 to $\mathcal{T}$, see also \cite[Example 10]{Barria1982} for the corresponding talk about the C\'{e}saro operator.

\vspace{11 pt}\noindent 
Notably, the linear closure of all positive Hankel operators on $H^2$ and the generalized Hilbert operators $H_\mu$ defined and studied in \cite{Gala} and \cite{CHATZIFOUNTAS2014154} belong to $\mathcal{T}$. The proof is heavily  based on  \cite[Example 8]{Barria1982}.

\begin{prop}
Every positive bounded Hankel operator is in $\mathcal{T}$. In particular 
$$
H_{\mu}f(z)=\int_{0}^{1}\dfrac{f(t)}{1-tz}d\mu(t),
$$
with $\mu$ a positive measure on $[0,1)$, belongs to $\mathcal{T}$ when it is bounded.
\end{prop}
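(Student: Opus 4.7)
The argument proceeds in two stages.

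The first stage reduces the general statement to the particular one. By a classical characterization of positive bounded Hankel operators on $H^2$ (combining the Hamburger moment theorem with the fact that boundedness on $\ell^2$ forces the moment sequence to come from a measure of compact support), every such operator has matrix entries $(c_{n+m})_{n,m\geq 0}$ with $c_n = \int_0^1 t^n\, d\mu(t)$ for some positive finite Borel measure $\mu$ on $[0,1]$; the boundedness hypothesis moreover forces $\mu(\{1\}) = 0$. Expanding $(1-tz)^{-1} = \sum_n t^n z^n$ then identifies the operator with $H_\mu$ in the statement, so it suffices to prove $H_\mu \in \mathcal{T}$ for positive $\mu$ on $[0,1)$ with $H_\mu$ bounded.

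In the second stage the decisive input is Barr\'ia--Halmos's Example 8, which establishes that the classical Hilbert matrix $H_{dt}$ belongs to $\mathcal{T}$. I would split $\mu = \mu|_{[0,r]} + \mu|_{(r,1)}$ for some $r\in(0,1)$ close to $1$. The piece $H_{\mu|_{[0,r]}}$ has kernel $(1-tz)^{-1}$ continuous on the compact set $[0,r]\times\overline{\mathbb{D}}$, so a uniform polynomial approximation in $(t,z)$ realizes it as a norm limit of finite-rank operators, placing it in $\mathcal{K}(H^2)\subseteq \mathcal{T}$. For the tail $H_{\mu|_{(r,1)}}$, I would emulate the Barr\'ia--Halmos construction: they express $H_{dt}$ as a uniformly convergent combination of Toeplitz operators modulo compacts, via explicit commutator identities with the shift. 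These identities depend on the structure of the reproducing-kernel $(1-tz)^{-1}$ rather than on Lebesgue measure specifically, so substituting $d\mu|_{(r,1)}$ in the appropriate place and invoking the norm closure of $\mathcal{T}$ would conclude the argument.

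The main obstacle is precisely this last step: generalizing the Barr\'ia--Halmos approximation from Lebesgue to an arbitrary positive measure while preserving the uniform estimates required for norm convergence. Here the standing assumption that $H_\mu$ be bounded---which, for $\mu \geq 0$, is equivalent to a Carleson-type condition on $\mu$ (see \cite{Gala} and \cite{CHATZIFOUNTAS2014154})---should supply the necessary control. Once this is in place, the tail argument runs in direct parallel with the Hilbert matrix case, and combining with the compactness of the piece near $t = 0$ gives $H_\mu \in \mathcal{T}$.
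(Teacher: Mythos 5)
The paper itself records no proof of this proposition beyond the pointer to \cite[Example 8]{Barria1982}, so your proposal has to stand on its own --- and as it stands it is not a proof. The decisive step is missing, and you say so yourself: ``the main obstacle is precisely this last step.'' The splitting $\mu=\mu|_{[0,r]}+\mu|_{(r,1)}$ buys nothing, because the tail $H_{\mu|_{(r,1)}}$ is exactly as hard as $H_\mu$ itself (if it were small in norm for $r$ near $1$, the operator would be compact and there would be nothing to prove). The assertion that the Barr\'ia--Halmos identities ``depend on the reproducing kernel rather than on Lebesgue measure, so substituting $d\mu$ concludes'' is the whole content of the proposition, and the naive substitution fails: writing $H_\mu=\int_{[0,1)} u_t\otimes u_t\,d\mu(t)$ with $u_t(z)=(1-tz)^{-1}$, each integrand is rank one (hence in $\mathcal T$), but $\|u_t\otimes u_t\|=(1-t^2)^{-1}$, and the Carleson-type condition equivalent to boundedness of $H_\mu$ only gives $\mu((1-\varepsilon,1))=O(\varepsilon)$, so $\int(1-t^2)^{-1}d\mu(t)$ diverges already for Lebesgue measure. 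Likewise the identity $u_t\otimes u_t=\tfrac{1-t^2}{t^2}\bigl(T_{|\phi_t|^2}-T_{\phi_t}T_{\bar\phi_t}\bigr)$ with $\phi_t=(1-tz)^{-1}$ expresses each fibre as an element of $\mathcal T$, but the resulting symbol $\int \tfrac{1-t^2}{t^2}|\phi_t|^2\,d\mu(t)$ is the balayage of $\mu$, which is in $\mathrm{BMO}$ but generally not in $L^\infty$ (for Lebesgue measure it has a logarithmic singularity), so the integral is not a norm-convergent combination of bounded Toeplitz operators. The uniform estimates you hope the Carleson condition ``should supply'' are precisely what is not automatic; some regularization or summation-by-parts device specific to the Barr\'ia--Halmos construction is needed and is not supplied.

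There is also an error in your first stage. Positivity of the Hankel matrix $(c_{j+k})$ is the Hamburger condition, not the Stieltjes one, so the representing measure lives on $(-1,1)$, not on $[0,1)$: for instance $c_n=(-1/2)^n$ gives a positive rank-one Hankel operator whose measure is $\delta_{-1/2}$, and $c_n=(-1)^n/(n+1)$ gives a positive, bounded, non-compact Hankel operator whose measure is supported on $(-1,0)$. Hence the class of positive bounded Hankel operators is strictly larger than $\{H_\mu:\mu\ge 0\text{ on }[0,1)\}$, and your claimed identification of the two classes is false as stated (though presumably repairable by treating the mass near $t=-1$ symmetrically). In short: the reduction is incorrect as written, and the core step --- membership of the non-compact tail in $\mathcal T$ --- is asserted rather than proved.
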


\vspace{22 pt}\noindent 
\bibliographystyle{plain}

\bibliography{ProJectA_B}
\end{document}